\documentclass[reqno,11pt]{amsproc}
\usepackage[utf8]{inputenc}
\usepackage[english]{babel}
\usepackage{amssymb,amsmath}
\usepackage{indentfirst}
\usepackage[usenames]{color}
\usepackage{graphicx}
\usepackage[left=2cm,right=2cm,top=1.5cm,bottom=1.5cm,bindingoffset=0cm]{geometry}
\usepackage{amsthm}
\usepackage{amsfonts}
\usepackage{graphicx}
\graphicspath{{pictures/}}
\DeclareGraphicsExtensions{.pdf,.png,.jpg}
\usepackage{wrapfig}

\theoremstyle{plain}  
\newtheorem*{acknowledgements}{Acknowledgements}
\newtheorem{theorem}{\bf Theorem}

\newtheorem{lemma}{\bf Lemma}
\newtheorem{corollary}{\bf Corollary}

\newtheorem{oldthm}{\bf Theorem}

\newtheorem{oldconj}{\bf Conjecture}

\newtheorem{remark}{\bf Remark}

\renewcommand{\ll}{\lambda}

\newcommand{\de}{\operatorname{def}}

\usepackage{xcolor}
\usepackage{hyperref}

\usepackage{float}

\hypersetup{pdfstartview=FitH, linkcolor=linkcolor,urlcolor=urlcolor, colorlinks=true, linkcolor=blue, citecolor=blue, urlcolor=blue}

\newcommand\Colorhref[3][blue]{\href{#2}{\small\color{#1}#3}}

\begin{document}

\title{Upper and lower estimates for integer complexity}
\author{Sergei Konyagin}
\address{Sergei Konyagin, Steklov Mathematical Institute, Gubkina Str. 8, Moscow 119991, Russia}
\email{konyagin@mi-ras.ru}

\author{Kristina Oganesyan}
\address{Kristina Oganesyan, Steklov Mathematical Institute, Gubkina Str. 8, Moscow 119991, Russia}
\email{kristina.oganesyan@mi-ras.ru}
\date{}

\begin{abstract}
Let $\|n\|$ stand for the integer complexity of the number $n$, i.e. for the least number of $1$'s needed to write $n$ using arbitrary many additions, multiplications, and parentheses. The two-sided inequality $3\log_3 n\leq\|n\|\leq 3\log_2 n$ for all $n$ is well known and reveals the logarithmic behaviour of the complexity function $\|n\|$. While the lower bound $3\log_3 n$ is attained infinitely many times at powers of $3$, the best upper estimate is still unknown, although there are some improvements of the trivial bound $3\log_2 n$. Besides, for $``$typical$"$ numbers, i.e. for almost all numbers $n$, the better inequality $\|n\|\leq C_{avg}\log n$ holds, where, importantly, $C_{avg}\approx { 3.236}<\sup_{n} \frac{\|n\|}{\log n}$ .

We show that in fact $\|n\|\leq C_{avg}\log n+o(\log n)$ as $n\to\infty$, which, in particular, yields that $\limsup\limits_{n\to\infty}\frac{\|n\|}{\log n}\leq C_{avg}$. We also obtain the first nontrivial lower bound $\|n\|\geq 3.06\log_3 n$ for almost all numbers $n$. 
\end{abstract}

\keywords{Integer complexity, exponential sums, uniform distribution of sequences}
\subjclass[2020]{11Y16, 11B37, 11L07, 11K36}


\maketitle

\section{Introduction}

Define the integer complexity (or just complexity) $\|n\|$ of a positive integer $n$ to be the least number of $1$'s needed to write $n$ using arbitrary many additions $+$, multiplications $\cdot$, and parentheses $(\;)$. For example, one can check that if $n\leq 5$, then there is no way to write $n$ using less than $n$ copies of $1$, which means $\|n\|=n$ for these numbers, while $6$ can be already represented as $(1+1)(1+1+1)$ and has therefore the complexity $\|6\|=5$. Importantly enough, the function of integer complexity is not monotone: for instance, an optimal expression for $11$ is 
 $(1+1+1)(1+1+1)+1+1$, whence $\|11\|=8$, however,  $\|12\|=7$ due to the factorization $12=(1+1)(1+1)(1+1+1)$. 
 
 Introduced by K. Mahler and J. Popken \cite{MP} back in 1953 and divulged by R.K. Guy \cite{G1}, integer complexity became an extensively studied subject (see \cite{G1,R,Ar} in the first place), both analytically and experimentally.

 It is easy to see that once having expressed two numbers $a$ and $b$ using $1$'s, additions, multiplications, and parentheses, one can straightforwardly derive a corresponding expression for $a+b$ and $ab$ with the number of $1$'s involved being equal to the sum of those for $a$ and $b$. Moreover, for a given number $n\geq 2$, one can consider an optimal expression and recover the order of the operations $+$ and $\cdot$ in it dictated by the parentheses, so that the last operation will determine a pair of numbers $a$ and $b$ with $\|a\|+\|b\|=\|n\|$ and satisfying either $ab=n$ or $a+b=n$. Thus, for $n\geq 2$,
 \begin{align*}
     \|n\|=\min_{\underset{(a+b-n)(ab-n)=0}{a,b<n}}(\|a\|+\|b\|),
 \end{align*}
 which provides a recurrence relation for computing $\|n\|$. We refer the reader to \cite{AL,CEHMPSSV,IBCOOP,H} and references therein to get acquainted with the known algorithms for calculating complexities of positive integers. It is worth noting that there is an available online calculator by J. Iraids (see \cite{Sl}) that allows one to compute $\|n\|$ for all $n\leq 10^{12}$. 

The behaviour of $\|n\|$  is well known to be logarithmic, namely, the following upper and lower bounds, attributed to J. Selfridge and D. Coppersmith (see \cite{G1}), hold: 
 \begin{align*}
     3\log_3 n\leq \|n\|\leq 3\log_2 n.
 \end{align*}
Here the inequality $\|n\|\leq 3\log_2 n$ follows immediately from the binary expansion of the number $n$, while the other one needs a more delicate argument (see e.g. \cite[Cor. 3]{Ar}). One can see that the lower bound is attained infinitely many times at the numbers of the form $n=3^k$, whence, in particular,\footnote{Here and below, $\log n=\log_e n$.}
\begin{align}\label{liminf}
    \liminf_{n\to\infty}\frac{\|n\|}{\log n}=\frac{3}{\log 3}\approx 2.731.
\end{align}
For the upper bound, an improvement was obtained in \cite{Z}, however, obtaining the sharp bound remains an open problem. Up to now, judging by the numerical evidences, it looks like the most suitable candidate for being $``$the most complex$"$ number maximizing the ratio $\|n\|/\log n$ is the number $1439$ with $\|1439\|=26$ and $\|1439\|/\log 1439\approx 3.575$ (compare this with the trivial upper bound $3/\log 2\approx 4.328$).

A portion of interesting questions arises if we do not aim to find the worst number in terms of the complexity/logarithm ratio but rather intent to understand the $``$typical$"$ behaviour of this ratio. In this regard, a classical approach (see e.g. \cite[Sec. 4]{Ar}) of averaging binary digits gives for almost all $n$ (i.e. for all numbers $n\in\mathbb N$ but a set of zero density),
\begin{align*}
    \frac{\|n\|}{\log n}\leq \frac{5}{2\log 2}\approx 3.607.
\end{align*}
Roughly speaking, this inequality follows from the fact that the number $k(n)$ of $1$'s in the binary representation of $n$ is in average close to $n/2$, whence it remains to observe that $\|n\|\leq 2\log_2 n+k(n)\log_2 n$. As reported in \cite{G1}, J. Isbell, using base $24$, improved this bound to  $265/(24\log 24)\approx 3.474$.  
 After that, with the help of powerful calculations in bases $2^9 3^8$, $2^{11}3^9$, {and $2^{15}3^{14}5$} new bounds were obtained in \cite{AL}, {\cite{CEHMPSSV}, and \cite{Am}}, so that the best bound up to date is
\begin{align}\label{avg}
    \frac{\|n\|}{\log n}\leq C_{avg}:={\frac{69451528190836
}{2^{15}3^{14}5\log (2^{15}3^{14}5)}\approx 3.236}\quad\text{for almost all}\;n.
\end{align}
{It is worth noting that based on the ideas from \cite{S}, a better upper bound $\approx 3.204$ for almost all numbers in the sense of logarithmic density was obtained in \cite{Sh} 
 and eventually refined to $\approx 3.16$ in \cite{Am}.}

In this paper, invoking the exponential sum technique, we show that estimate \eqref{avg} holds for all numbers $n$ up to an error term that vanishes as $n$ tends to infinity. Namely, we prove 
\begin{theorem}\label{th11}
For any positive integer $n\geq 3$,
\begin{align*}
\|n\|\leq C_{avg}\log n+C(\log n)^{2/3}(\log\log n)^{4/3},
\end{align*}
where $C$ is an absolute constant.
\end{theorem}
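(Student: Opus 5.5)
We will prove Theorem~\ref{th11} by turning the ``almost all'' bound \eqref{avg} into a bound for every $n$. The idea is to first multiply $n$ by a cheap factor so that the result has typical digits in base $B:=2^{15}3^{14}5$.

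Recall how \eqref{avg} arises. Write $m$ in base $B$ with digits $d_{k-1},\dots,d_0$ and use the Horner scheme $m=(\cdots(d_{k-1}B+d_{k-2})B+\cdots)B+d_0$. This gives a cost $c(d)$ for each digit (multiplication by $B$ together with addition of $d$, optimized jointly as in \cite{Am}). Then
\[
\|m\|\le \sum_j c(d_j)+O(1),
\qquad
\frac1B\sum_{d<B}c(d)=C_{avg}\log B .
\]
So $\|m\|\le C_{avg}\log m+\mathrm{err}$ whenever the empirical digit distribution of $m$ is close to uniform. More precisely, it suffices that for every block of digits the average cost is within a small error of its mean.

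\textbf{Step 1 (reduction).} Note that $\|n\|\le\|m\|$ is not available for $m$ merely close to $n$. Instead we use $\|n\|\le \|nq+r\|$-type identities in reverse. The cleaner route is to write
\[
n = a\cdot t + s, \qquad \text{so that}\qquad \|n\|\le \|a\|+\|t\|+\|s\|,
\]
with $t$ a long number whose base-$B$ digits are well distributed, and $a,s$ small. Concretely, we take $t=\lfloor n/a\rfloor$ and $s=n-at<a$, with $a$ ranging over a set of candidates costing $o(\log n)$ each. A natural family is $a=3^j$ or $a=2^i3^j$ with $i,j\le L$, so that $\|a\|,\|s\|=O(L)$. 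It then remains to show that for some $a$ in this family the digits of $\lfloor n/a\rfloor$ in base $B$ are equidistributed up to error $\delta$. In that case
\[
\|n\|\le C_{avg}\log n + O(\delta\log n)+O(L).
\]

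\textbf{Step 2 (equidistribution via exponential sums).} The $j$-th digit of $n/a$ is determined by the fractional part $\{n/(aB^{j+1})\}$. So we need the sequence $x_j=\{\alpha B^{-j}\}$, with $\alpha=n/a$, to have small discrepancy for a good $a$. It is more convenient to read the digits from the top and treat $\{ n a^{-1} B^{-(k-j)}\}$, or equivalently to multiply: $m=na'$ with $a'=2^i3^j$, and then use $\|n\|\le\|m\|$ after writing $m$'s expansion so that the factor can be divided out. Dividing exactly is impossible, so we stay with the $\lfloor n/a\rfloor$ version.

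By Erd\H{o}s--Tur\'an, the discrepancy of $x_0,\dots,x_{k-1}$ is controlled by
\[
\frac1H+\sum_{h\le H}\frac1h\Bigl|\frac1k\sum_{j<k}e(h x_j)\Bigr| .
\]
We do not bound this for a single $a$. Instead we average over the family of $a$ and bound the mean square
\[
\sum_a\Bigl|\sum_j e\bigl(h n/(aB^{j})\bigr)\Bigr|^2 .
\]
Expanding the square gives sums $\sum_a e\bigl(hn(B^{-j_1}-B^{-j_2})/a\bigr)$. For $|j_1-j_2|$ not too small, the frequency $hn(B^{-j_1}-B^{-j_2})$ is large, and varying $a$ over $2^i3^j$ gives cancellation. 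One could use lacunarity of $\{\theta 2^{-i}3^{-j}\}$ in the style of results on the uniform distribution of $\{\theta 2^i3^j\}$. Alternatively, we choose $a$ from an interval $[A,2A]$ of all integers and apply a Vinogradov/van der Corput-type estimate for $\sum_{a} e(X/a)$.

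We expect this step to be the main obstacle. The difficulty is obtaining a saving uniform in $n$ while keeping $\|a\|=O(\log A)$ small. The first thing we will try is $a$ in an interval $[A,2A]$ with $\|a\|\le 3\log_2(2A)$ trivially. We then bound $\sum_{A<a\le2A}e(X/a)$ by van der Corput with $X$ ranging over the scales $hnB^{-j}$, and discard the digits whose scale $X$ is comparable to $A$, which lie in a band of $O(\log A/\log B)$ digits.

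\textbf{Step 3 (optimization).} The costs are: $O(\log A)$ for $a$ and $s$; the discrepancy error $\delta\log n$; and the boundary digits. To treat these, split the digit string into blocks of length $\ell$ and apply the equidistribution on blocks of $\ell$ consecutive digits, which handles the Horner carries. Balancing $\log A$, $H^{-1}\log n$, and the exponential-sum saving, with a logarithmic loss from the number of frequencies and blocks, should give the total error $(\log n)^{2/3}(\log\log n)^{4/3}$. Here the exponent $2/3$ comes from equating $L$ with $(\log n)/H$ and with the $H^2$-type losses.
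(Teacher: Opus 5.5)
Your reduction matches the paper's. You write $n=a\lfloor n/a\rfloor+s$ with $a$ in a dyadic range $[A,2A)$, expand $\lfloor n/a\rfloor$ by Horner's scheme in base $B=b_0$, read the digits from $\{n/(aB^{j+1})\}$, apply Erd\H{o}s--Tur\'an, and are left with sums $\sum_{A\le a<2A}e(X/a)$, $X\asymp hnB^{-j}$.

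\textbf{The gap.} It is exactly the step you flag as the main obstacle. You never supply a bound for these sums of the required strength, and the one tool you concretely commit to, van der Corput, cannot provide it. Since $\|a\|,\|s\|\asymp\log A$, the target error forces $\log A$ to be about $(\log n)^{2/3}$. Meanwhile $X$ ranges up to about $n$, so $\log X/\log A\asymp(\log n)^{1/3}\to\infty$. In this regime the $k$-th derivative test is nontrivial only for $k\gtrsim\log X/\log A$. It then saves at most a factor $\exp(-O(2^{-k}\log A))$, which is essentially nothing. To get a useful saving this way you would need $\log A\gtrsim\log n/\log\log n$. That yields $\|n\|\le C_{avg}\log n+O(\log n/\log\log n)$, which is enough for \eqref{cor1} but not for Theorem~\ref{th11}.

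The missing ingredient is Vinogradov's method, in the form the paper quotes from \cite[Th.~8.25]{IK}: for $f(x)=X/x$ on $[A,2A]$ with $F\asymp X/A\ge A^4$, one has $\sum e(f(a))\lesssim A\exp(-2^{-18}(\log A)^3/(\log F)^2)$. The exponent $2/3$ comes from balancing the cost $\log A$ of $a$ and $s$ against the discrepancy loss $\log n\cdot\exp(-c(\log A)^3/(\log n)^2)$, which gives $\log A\asymp(\log n)^{2/3}(\log\log n)^{1/3}$. It does not come from balancing $L$ against $(\log n)/H$ and ``$H^2$-type losses''; the paper takes $H=p\approx\log n/\log\log n$ at a cost of only $O(\log\log n)$. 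The lacunary family $a=2^i3^j$ is not a substitute either, since no effective equidistribution of anything like the needed strength is known for it.

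\textbf{Comparison with the paper once the estimate is inserted.} Your plan then goes through, and it differs from the paper only in the order of averaging. The paper applies Erd\H{o}s--Tur\'an in the variable $a$ (its $k\in[K,2K)$) for each fixed digit position $j$. It sums the near-uniform digit counts over $j$, and a plain first-moment argument over $a$ then produces a good $k_0$. Your version applies Erd\H{o}s--Tur\'an in $j$ for each $a$ and then Cauchy--Schwarz over $a$. This also works: the diagonal terms cost only $O(\sqrt{\log n}\,\log H)$ in the end, and the off-diagonal frequencies $hn|B^{-j_1-1}-B^{-j_2-1}|$ are comparable to $hnB^{-\min(j_1,j_2)-1}$. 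The second moment is simply unnecessary.

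\textbf{Two smaller points.}
\begin{enumerate}
\item Digit $j$ of $\lfloor n/a\rfloor$ is governed by $\{n/(aB^{j+1})\}$ only up to an error $hs/(aB^{j+1})\le hB^{-j-1}$. You must therefore also discard the low digits with $B^j\lesssim H^3$, which is the paper's condition \eqref{cond22}.
\item No blocks are needed for ``carries'', since $\|Bx+r\|\le\|x\|+D(B,r)$ makes the Horner cost exactly additive over digits.
\end{enumerate}
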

In fact, we will provide a more general statement (see Theorem \ref{th1} in Section \ref{Upper}) that enables one to derive general upper bounds for integer complexity using averaging of digits in a given base. 

In particular, Theorem \ref{th11} readily implies

\begin{align}\label{cor1}
\limsup_{n\to\infty}\frac{\|n\|}{\log n}\leq C_{avg}.
\end{align}

Although there was no nontrivial lower bound for $\limsup\limits_{n\to\infty}\frac{\|n\|}{\log n}$ up to now, there were some evidences suggesting one. To this end, the following currently open question was initially posed by J. Selfridge (see \cite{G1}): does there exist any positive integer $a$ with $\|2^a\|<2a$? It was shown in \cite{IBCOOP}, that $\|2^a\|=2a$ as long as $2^a<10^{12}$, and that moreover, $\|2^a 3^b 5^c\|=2a+3b+5c$ for all $a+b+c>0$ and $c<6$ provided that $2^a 3^b 5^c<10^{12}$ as well. This heavily supports Conjecture \ref{conjec} \cite[Hyp. 5]{IBCOOP} below (which is a generalization of the one in \cite{G1} corresponding to the case $c=0$) and suggests a negative answer to Selfridge's question, which in turn would imply the nontrivial lower bound $\limsup\limits_{n\to\infty}\frac{\|n\|}{\log n}\geq 2/\log 2.$
\begin{oldconj}\label{conjec} For any nonnegative integers $a,b,c$ with $a+b+c>0$ and $c<6$,
\begin{align*}
    \|2^a 3^b 5^c\|=2a+3b+5c.
\end{align*}
\end{oldconj}
According to {\cite{A2}}, 
Conjecture \ref{conjec} is true provided that $a\leq {48}$ and $c=0$.

As for a nontrivial lower bound for almost all numbers, nothing was known so far. Our next result provides the first breakaway from the trivial bound.

\begin{theorem}\label{th_low}
    For almost all numbers $n$,
\begin{align*}
    \|n\|> 3.06\log_3n. 
\end{align*}
\end{theorem}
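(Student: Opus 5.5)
A counting argument seems the natural route. Fix $c>3/\log 3$ (the target is $c=3.06/\log 3$). I will show that the number of positive integers $m$ with $\|m\|\le k$ is at most $\alpha^{k}$ for some explicit $\alpha$, up to a subexponential factor. If $\alpha^{c\log x}=o(x)$, i.e. $c\log\alpha<1$, then the set of $n\le x$ with $\|n\|\le c\log n$ has size $o(x)$. Its complement therefore has density one, which gives Theorem \ref{th_low}. Writing $A_k=\#\{m:\|m\|\le k\}$, the task reduces to a good growth bound $A_k\ll \alpha^k k^{O(1)}$ with $\alpha<3^{1/3.06}\approx 1.4317$.

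The trivial bound comes from counting all formulas with $k$ ones. Their number is $\sim$ (number of binary trees with $k$ leaves) $\cdot\, 2^{k-1}$, which is roughly $8^k$, far too large. The gain has to come from counting only \emph{optimal} expressions in a normal form. I would encode each $m$ by a canonical optimal expression and use structural restrictions.
\begin{itemize}
\item Addition is flattened into a sum of several summands, none of which is itself a sum.
\item Multiplication is flattened similarly.
\item Optimality forbids many configurations. For instance, a sum can contain $1$ at most a bounded number of times, since $1+1+1+1+1+1$ can be replaced by $(1+1)(1+1+1)$. Also, products of small numbers can be restricted.
\item Summands and factors are listed in nonincreasing order, which kills the permutation multiplicity.
\end{itemize}
This gives a recursive bound. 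Let $S_k$ count "sum-type" values and $P_k$ "product-type" values of complexity exactly $k$. Then, roughly, $P_k\le \sum$ over multisets of sum-type factors with complexities summing to $k$, and $S_k\le\sum$ over multisets of product-type summands plus a few ones. Passing to generating functions $S(z),P(z)$ with multiset (Pólya exponential) relations, the growth rate $\alpha$ is the reciprocal of the radius of convergence of the resulting system. Multiset counting is essential here: it is what brings $\alpha$ down toward the known experimental growth of $A_k$, which is about $3^{k/3}\cdot$ small factors.

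A sharper refinement, likely needed, is a size-stratified count. The number of $m\le x$ with $\|m\|\le k$ matters, not all $m$. Since a product-type $m=ab$ has $\log m=\log a+\log b$ and $\|a\|\ge 3\log_3 a$, most cheap expressions produce numbers much smaller than $3^{k/3}$ whenever they contain many additions. Concretely, I would bound $\#\{n\le x:\|n\|\le k\}$ by using the \emph{defect} $\de(n)=\|n\|-3\log_3 n\ge0$. Numbers with $\|n\|\le c\log n$ have defect at most $(c\log 3/3-1)\cdot 3\log_3 n\approx 0.06\log_3 n$. I then count $n\le x$ with defect at most $\delta\log_3 x$ via the recursion on expressions. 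Each addition node essentially forces a defect increment, because $\|a+b\|=\|a\|+\|b\|$ but $\log(a+b)\le\log\max+\log 2$, while factors combine with additive defect. So an expression with $r$ addition nodes and $k$ ones represents $n\lesssim 3^{k/3}\theta^{r}$ for some $\theta<1$ computable from the worst case $a+1$. The count of expressions with $r$ additions is then roughly $\binom{k}{r}C^{r}$, and I need $\binom{k}{r}C^r\cdot$(multiplicative freedom) $=o(x)$ for $r\lesssim 0.06k$-ish.

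I expect this exponent bookkeeping to be the main obstacle: balancing the entropy $\binom{k}{r}C^r$ of placing additions against their size loss, so as to reach a constant as large as $3.06$. I would first try a clean tree-counting with defect weights, taking a weighted generating function $F(z,w)=\sum_n z^{\|n\|}w^{\log n}$ restricted to normal forms, and optimize a Chernoff-type bound numerically to verify that $3.06$ is attainable.
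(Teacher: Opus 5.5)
You have a genuine gap: neither of your two routes establishes a bound. The first cannot work as described, and the second rests on a false claim and leaves the decisive computation undone.

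\textbf{First route.} A P\'olya-type system of generating functions built from your local rules (flattening, sorting, a bounded number of $1$'s per sum, some restrictions on small products) counts \emph{trees} obeying those rules, not values. It cannot see that most such trees are non-optimal or represent the same number. The number of such trees grows at an exponential rate well above $3^{1/3}$, whereas $A_k\le 3^{k/3}$ already follows trivially from $\|n\|\ge 3\log_3 n$. Any saving below $3^{k/3}$ has to come from the defect, i.e.\ from showing $\#\{n\le x:\ \de(n)\le 0.06\log_3 x\}=o(x)$. This is exactly the reduction the paper makes.

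\textbf{Second route.} It targets the right quantity, but its central claim is false as stated. For an optimal sum, i.e.\ when $\|a+b\|=\|a\|+\|b\|$, one has $\de(a+b)=\de(a)+\|b\|-3\log_3(1+b/a)$. Consequences:
\begin{itemize}
\item Additions do not each raise the defect: $2+1$ \emph{lowers} it, since $\de(3)=0<\de(2)$.
\item $3^j=(1+1+1)^j$ uses $2j$ additions and has defect $0$. So ``$n\lesssim 3^{k/3}\theta^r$ with $r$ the number of addition nodes'' fails, and the entropy count $\binom{k}{r}C^r$ for small $r$ has nothing to rest on.
\item Your inequality $\log(a+b)\le\log\max(a,b)+\log 2$ only gives an increment $\ge\|b\|-3\log_3 2$, which is negative for $b=1$.
\end{itemize}

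To repair this you would need several further steps, none of which the proposal contains:
\begin{itemize}
\item Treat every subexpression of value at most some threshold $M$ as an atom. Then each remaining sum $a+b>M$ with $b\le a$ raises the defect by at least $c_0\|b\|$, with $c_0=c_0(M)$ close to $1$.
\item Genuinely count the multiplicative part, which is not negligible. An atom like $2$ costs only $\de(2)\approx 0.107$, so a defect budget $D$ permits about $9D$ such factors spread over the product slots between additions.
\item Bound the tree shapes, the powers of $3$ in each slot, the smaller summands $b$ (trivially, at most $3^{\|b\|/3}$ each), and the non-$3$ atoms (e.g.\ by Rankin's trick).
\end{itemize}
Only then do you get an exponent to compare with $\log 3$. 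With these repairs a count of this kind does appear to close, so the idea is salvageable. But your proposal stops exactly at the step you yourself flag as the main obstacle, so $3.06$ is never reached.

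\textbf{The paper's route.} The paper does this step via the Altman--Zelinsky classification of leaders. It proves by induction on defect classes of width $\sigma=0.48<4.5\de(2)$ (chosen so that $2,4,8,16$ all lie in the first class) that $U_B(k,m)\le\lambda(Cm)^{k-2}/k^{k+1}$ with $C=780$. It then sums over $k\le\gamma m/\sigma+1$ and checks $\frac{\gamma}{\sigma}\log_3\frac{C\sigma}{\gamma}<1$ for $\gamma=0.06$.
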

In particular, combining this result with \eqref{cor1}, we can formulate 
\begin{corollary} We have
    \begin{align*}
        2.785\leq \limsup_{n\to\infty}\frac{\|n\|}{\log n}\leq {3.237}.
    \end{align*}
\end{corollary}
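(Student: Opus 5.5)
The corollary follows directly from Theorem \ref{th_low} and \eqref{cor1}. I would treat the two inequalities separately and reduce each to a numerical check of the constants.

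For the upper bound, I would invoke \eqref{cor1}, which is a consequence of Theorem \ref{th11}. Dividing the bound of Theorem \ref{th11} by $\log n$ gives
\begin{align*}
\frac{\|n\|}{\log n}\leq C_{avg}+C\frac{(\log\log n)^{4/3}}{(\log n)^{1/3}},
\end{align*}
and the second term tends to $0$ as $n\to\infty$. Hence $\limsup_{n\to\infty}\|n\|/\log n\leq C_{avg}$. It then remains to check numerically that
\begin{align*}
C_{avg}=\frac{69451528190836}{2^{15}3^{14}5\log(2^{15}3^{14}5)}\approx 3.236<3.237.
\end{align*}
This requires evaluating the exact rational-over-logarithm expression to four significant digits, so that rounding does not spoil the strict inequality.

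For the lower bound, I would use Theorem \ref{th_low}. The set of $n$ with $\|n\|>3.06\log_3 n$ has density $1$, so in particular it is infinite. For every $n$ in this set,
\begin{align*}
\frac{\|n\|}{\log n}>\frac{3.06}{\log 3}.
\end{align*}
Because these $n$ are unbounded, we get $\limsup_{n\to\infty}\|n\|/\log n\geq 3.06/\log 3$. Finally, since $\log 3<1.0987$,
\begin{align*}
2.785\cdot\log 3<2.785\cdot 1.0987\approx 3.0599<3.06,
\end{align*}
so $3.06/\log 3>2.785$.

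There is no real obstacle here. The only care needed is in the two numerical checks, where enough precision must be kept in $\log 3$ and in $C_{avg}$ so that both strict inequalities survive rounding.
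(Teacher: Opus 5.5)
Your proposal is correct and follows the paper's own argument. The paper also obtains the upper bound from \eqref{cor1}, i.e. Theorem \ref{th11} divided by $\log n$, with $C_{avg}\approx 3.2361<3.237$, and the lower bound from Theorem \ref{th_low} applied along an infinite (density-one) set of $n$, giving $3.06/\log 3\approx 2.7853>2.785$.
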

As a final remark, out of pure curiosity and for the reader's convenience, we compare the three values
\begin{align*}
    \frac{3}{\log 3}\approx 2.731<\frac{3.06}{\log 3}\approx 2.785<\frac{2}{\log 2}\approx 2.885
\end{align*}
that correspond to the three mentioned lower bounds for $\|n\|/\log n$: the trivial one, the one provided by Theorem \ref{th_low} for almost all numbers, and the one that would follow from the particular case $b=c=0$ of Conjecture \ref{conjec} for almost all $n$. 

\section{Proof of an upper bound for integer complexity}\label{Upper}
Following {\cite{AL}}, denote
\begin{align*}
D(b,r):=\sup_{n\in\mathbb N}(\|bn+r\|-\|n\|).
\end{align*} 
Importantly, {the known proofs of inequalities of the form $\|n\|\leq C\log n$ for almost all $n$ are based on the fact that 
\begin{align*}
    C\leq \frac{1}{b\log b}\sum_{r=0}^{b-1}D(b,r)
\end{align*}
for any $b\geq 2$ (see \cite[Prop. 12]{AL}). In particular,} \eqref{avg} {in \cite{Am}}
 follows from the inequality
\begin{align}\label{eq_ceh}
\frac{1}{b_0\log b_0}\sum_{r=0}^{b_0-1}D(b_0,r)\leq C_{avg},
\end{align}
where {$b_0=2^{15}3^{14}5$}. 

We are going to show that by means of a trick involving exponential sums, one can perform some kind of averaging of digits not over an integer interval (as it was done for \eqref{avg}) but dealing with a single integer. 

\begin{theorem}\label{th1}
For any positive integers $m>1$ and $n\geq 3$,
\begin{align*}
\|n\|\leq \frac{1}{m\log m}\sum_{r=0}^{m-1}D(m,r)\log n+C(\log n)^{2/3}(\log\log n)^{4/3}+C\log m,
\end{align*}
where $C$ is an absolute constant.
\end{theorem}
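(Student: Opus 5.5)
The plan is to reduce to the classical base-$m$ digit argument, applied not to $n$ itself but to the quotient $u=\lfloor n/t\rfloor$ for a cleverly chosen small $t$. If $n=tu+v$ with $0\le v<t$, then
\begin{align*}
\|n\|\le \|t\|+\|u\|+\|v\|\le \|u\|+O(\log t).
\end{align*}
Write $u=\sum_{j=0}^{L}d_j m^j$ in base $m$, with $L=\lfloor\log_m u\rfloor$. Horner's scheme and the definition of $D(m,r)$ then give
\begin{align*}
\|u\|\le \|d_L\|+\sum_{j<L}D(m,d_j).
\end{align*}
So it suffices to find some $t$ in a range $[T,2T]$ for which the digits $d_j$ of $\lfloor n/t\rfloor$ are nearly equidistributed among $0,\dots,m-1$. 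In that case the sum is about $\frac{L}{m}\sum_r D(m,r)$, and $L\le \log n/\log m$, which produces the main term. We use the trivial bound $D(m,r)\le \|m\|+\|r\|\ll\log m$ for the contribution of badly behaved digits, together with $\|t\|,\|v\|\ll\log T$.

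The equidistribution comes from averaging over $t$. The $j$-th digit of $\lfloor n/t\rfloor$ equals $r$ exactly when the fractional part $\{x_j/t\}$, with $x_j=n/m^{j+1}$, lies in $[r/m,(r+1)/m)$. Hence
\begin{align*}
\frac1T\sum_{T\le t<2T}\sum_{j\le L}D(m,d_j(t))=\sum_{j\le L}\sum_{r}D(m,r)\cdot\frac1T\#\Bigl\{t:\{x_j/t\}\in\bigl[\tfrac rm,\tfrac{r+1}m\bigr)\Bigr\}.
\end{align*}
The inner proportions are $1/m$ up to the discrepancy $\Delta_j$ of the sequence $(x_j/t)_{T\le t<2T}$. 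We bound $\Delta_j$ via Erd\H{o}s--Tur\'an by exponential sums $\sum_t e(hx_j/t)$ over $1\le h\le H$. These we estimate by van der Corput's second-derivative (or $k$-th derivative) test: the phase $f(t)=hx_j/t$ has $f''\asymp hx_j/T^3$. This gives nontrivial cancellation roughly when $T^{2}\ll x_j\ll T^{C}$. Digits with $x_j$ outside that window are treated trivially: there are $O(\log T/\log m)$ top digits where $x_j<T^{2}$, and possibly a bounded number of additional ranges in $j$. Each such digit costs $O(\log m)$. Since some $t$ is at least as good as the average, that $t$ gives the bound.

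The main obstacle is the bottom digits, those with $x_j$ very large compared with $T$. For them, $f(t)=hx_j/t$ oscillates wildly, and a fixed-order derivative test loses. I would handle this by choosing the derivative order $k$ adapted to the size $\log x_j/\log T$ (Vinogradov-type or $k$-th derivative bounds). Alternatively, one can split the digits of $n$ into blocks and apply the quotient trick iteratively: write $n=t_1u_1+v_1$ at several scales, so that each block sees only a moderate ratio $\log x/\log T$. With $K$ blocks, each contributing $O(\log T)$ overhead, plus a per-digit discrepancy saving of the form $T^{-c/k^2}$, balancing suggests $\log T\asymp(\log n)^{2/3}(\log\log n)^{1/3}$. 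Optimizing $H$, $T$ and the block length should then produce an error of order $(\log n)^{2/3}(\log\log n)^{4/3}$. The extra $\log\log$ factors come from the choice of $H$ and from the $\log m$-type losses per block. The additive $C\log m$ absorbs $\|d_L\|$ and the case of very large $m$. This balancing of the exponential-sum saving against the number of sacrificed digits is where I expect the real work to lie.
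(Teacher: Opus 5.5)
This is the paper's route: write $n=t\lfloor n/t\rfloor+v$ with $t\in[T,2T)$, run Horner's scheme with the costs $D(m,r)$ on the base-$m$ digits of $\lfloor n/t\rfloor$, apply Erd\H{o}s--Tur\'an to the fractional parts $\{n/(m^{j+1}t)\}$, bound the exponential sums with phase $hn/(m^{j+1}t)$, and keep a $t$ that does at least as well as the average. Your exact identity $d_j=\lfloor m\{n/(m^{j+1}t)\}\rfloor$ is slightly cleaner than the paper's version. The paper replaces $(n-r_k)/k$ by $n/k$, and so must also discard the digits with $m^j\le p^3$.

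The step you leave open is the one that decides the rate, and it is not an edge case. With your own choice $\log T\asymp(\log n)^{2/3}(\log\log n)^{1/3}$, the digits with $x_j\le T^{O(1)}$ make up only a proportion $O(\log T/\log n)\to0$ of all digits. So the fixed-order van der Corput window contributes essentially nothing, and almost every digit is a ``bottom digit''. The Vinogradov-type bound is therefore the whole argument, not a patch for exceptional ranges. Van der Corput's $k$-th derivative test saves only $T^{-c/2^k}$; the best balance with it gives an error of roughly $\log n/\log\log n$. That suffices for the $\limsup$, but not for $(\log n)^{2/3}$.

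The paper closes this step with one citation, Iwaniec--Kowalski Thm.~8.25. It says: if $\alpha^{-\ell^3}F\le\frac{x^\ell}{\ell!}|f^{(\ell)}(x)|\le\alpha^{\ell^3}F$ on $[N,2N]$ for all $\ell\ge1$, and $F\ge N^4$, then $\sum e(f(n))\lesssim\alpha N\exp(-2^{-18}(\log N)^3(\log F)^{-2})$.

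\begin{itemize}
\item For $f(x)=c/x$ one has $\frac{x^\ell}{\ell!}|f^{(\ell)}(x)|=c/x$ exactly. So with $\alpha=\sqrt2$ and $F\asymp hx_j/T$, the theorem applies at once to every digit with $x_j\gtrsim T^5$.
\item Since $\log F\le\log(Hn)$, the saving is at least $\exp(-c(\log T)^3/(\log n)^2)$ uniformly in $j$. This is your $T^{-c/k^2}$ with $k=\log F/\log T$, without choosing $k$ digit by digit.
\item Only the top $O(\log_m T+1)$ digits are sacrificed, so no block decomposition is needed.
\end{itemize}

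Drop the block alternative; it is not clear it can work at all. In $Xm^L+B$ with $B=t'u'+v'$, the factorization of $B$ cannot be used inside the sum, so you lose the Horner structure that produces the main term.

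To finish, take $H\asymp\log n/\log\log n$ and $\log T=A(\log n)^{2/3}(\log\log n)^{1/3}$. Choose $A$ large enough that $\log n\,\log H\,\exp(-c(\log T)^3/(\log n)^2)\lesssim(\log n)^{2/3}$. The remaining errors are within the stated bound:
\begin{itemize}
\item the term $\log n/H\asymp\log\log n$;
\item $\|t\|+\|v\|=O(\log T)$;
\item the sacrificed top digits, which cost $O(\log T+\log m)$.
\end{itemize}

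One caution for the write-up. The discrepancy error per digit is $\Delta_j\sum_r D(m,r)$, which is only bounded by $m\,\Delta_j\max_r D(m,r)$, not by $\Delta_j\max_r D(m,r)$. The paper's summed estimate does not show this factor $m$ either. For fixed $m$ (e.g.\ $m=b_0$, which is all Theorem~\ref{th11} needs) it is absorbed into the constants. But it is what you must control if you want $C$ genuinely independent of $m$.
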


\begin{proof}
Note that for $m>n$ the statement is obvious, therefore, from now on we assume that $m\leq n$. For a fixed $m$, let the integer parameters $K=K(n,m)=o_m(n)$ and $1<p=p(n,m)=o_m(n)$, which will be chosen later, tend to infinity along with $n$. Fix some positive integer $j$ satisfying 
\begin{align}\label{cond11}
\sqrt{2}m^{j}K^5\leq n
\end{align}
and
\begin{align}\label{cond22}
m^j>p^3,
\end{align}
(it could happen that there is no such $j$, then we do not need the corresponding estimates). For each $k=K,...,2K-1$, denote by $r_k$ the remainder of $n$ modulo $k$ and consider the set of points $S_j:=\{(n-r_k)m^{-j}k^{-1}\}_{k=K}^{2K-1}$. Due to the Erd\"os-Tur\'an theorem \cite[\S 2, Th. 2.5]{KN}, we have the following estimate for the discrepancy:\footnote{Here and further, for two nonnegative functions $f$ and $g$, we write $f\lesssim g$ (equivalently, $g\gtrsim f$) meanig that there exists a constant $C>0$ such that $f\leq Cg$, while the notation $f\asymp g$ stands for $f\lesssim g\lesssim f$.}

\begin{align}\label{discr}
\Delta_{K,j}:&=\sup_{[\alpha,\beta]\in[0,1]}\Big|\frac{|\{x\in S_j:\{x\}\in[\alpha,\beta]\}|}{K}-(\beta-\alpha)\Big|\nonumber\\
&\lesssim \frac{1}{p}+\sum_{h=1}^p\Big(\frac{1}{h}-\frac{1}{p+1}\Big)\Big|\frac{1}{K}{\sum_{k=K}^{2K-1}}e\Big(\frac{h(n-r_k)}{m^{j}k}\Big)\Big|\nonumber\\
&\lesssim \frac{1}{p}+\sum_{h=1}^p\Big(\frac{1}{h}-\frac{1}{p+1}\Big)\Big|\frac{1}{K}{\sum_{k=K}^{2K-1}}e\Big(\frac{hn}{m^{j}k}\Big)\Big|,
\end{align}
where we used the inequality $|e(h(n-r_k)m^{-j}k^{-1})-e(hnm^{-j}k^{-1})|\lesssim hr_km^{-j}k^{-1}\leq p^{-2}$, valid in light of \eqref{cond22}.

For the sake of completeness, we provide here a statement \cite[Th. 8.25]{IK} that we will use further:

\begin{oldthm}\label{thm8.25}
    Let $f(x)$ be a smooth function on $[N,2N]$ such that for all $x$ {and all $j\geq 1$},
    \begin{align*}
        \alpha^{-j^3}F\leq \frac{x^j}{j!}|f^{(j)}(x)|\leq \alpha^{j^3}F,
    \end{align*}
    where $F\geq N^4$ and $\alpha\geq 1$. Then
    \begin{align*}
        \Big|\sum_{a<n<b}e(f(n))\Big|\lesssim \alpha N \exp(-2^{-18}(\log N)^3(\log F)^{{-2}}).
    \end{align*}
\end{oldthm}
For a fixed $h$, $1\leq h\leq p,$ let us estimate the inner exponential sum in \eqref{discr} according to Theorem \ref{thm8.25} assuming $f(x):=nhm^{-j}/x,\;N:=K,\;\alpha:=\sqrt{2},\;F:=hnm^{-j}2^{-1/2}/K$. Notice that $F\geq K^4$ is fulfilled due to \eqref{cond11}, and by the definition of $\alpha$ and $F$, 
\begin{align*}
\frac{F}{\sqrt 2}\leq \frac{x^{\ell}}{\ell!}|f^{(\ell)}(x)|=\frac{hn}{m^jx}\leq \sqrt 2F
\end{align*}
for all $\ell\geq 1$ and $K\leq x\leq 2K$. Hence, we derive from \eqref{discr} and Theorem \ref{thm8.25} that
\begin{align}\label{Delta}
\Delta_{K,j}&\lesssim \frac{1}{p}+\sum_{h=1}^p\Big(\frac{1}{h}-\frac{1}{p+1}\Big)\exp\Big(-2^{-18}\frac{(\log K)^3}{(\log\frac{hn}{\sqrt{2}m^{j}K})^2}\Big)\nonumber\\
&\lesssim \frac{1}{p}+\log p\exp\Big(-2^{-18}\frac{(\log K)^3}{(\log\frac{pn}{\sqrt{2}m^{j}K})^2}\Big)\nonumber\\
&\lesssim \frac{1}{p}+\log p\exp\Big(-2^{-18}\frac{(\log K)^3}{(\log\frac{n}{p^2K})^2}\Big),
\end{align}
where the implied constant does not depend on $m$. Thus, denoting by $[\ell]_i$ the $i$'s digit of $\ell$ in base $m$, we obtain for $j,\;\log_m p^3<j<\log_m\frac{n}{2K^5}$,
\begin{align*}
    \sum_{k=K}^{2K-1}D(m,[(n-r_k)/k]_j)&=\sum_{r=0}^{m-1} \sum_{k=K}^{2K-1}\chi_{\{[(n-r_k)/k]_j=r\}}D(m,r)\\
    &\leq K\sum_{r=0}^{m-1}\Big(\frac{D(m,r)}{m}+\Delta_{K,j}\max_{0\leq l<m}D(m,l)\Big),
\end{align*}
whence (in light of conditions \eqref{cond11} and \eqref{cond22}) due to \eqref{Delta},
\begin{align*}
&\sum_{\log_m p^3<j<\log_m\frac{n}{2K^5}}\sum_{k=K}^{2K-1}D(m,[(n-r_k)/k]_j)<K\log_m n\frac{1}{m}\sum_{r=0}^{m-1}D(m,r)\\
&\quad+K\cdot\mathcal{O}\Big(\frac{\log_mn}{p}+\log_mn\log p\exp\Big(-2^{-18}\frac{(\log K)^3}{(\log\frac{n}{pK})^2}\Big)\Big)\max_{0\leq r<m}D(m,r).
\end{align*}
Hence, noting that $D(m,r)\lesssim \log m$ for $r<m$, we get for some $k_0\in[K,2K-1]$,
\begin{align*}
\sum_{\log_m p^3<j<\log_m\frac{n}{2K^5}}&D(m,[(n-r_{k_0})/{k_0}]_j)<\log_m n\frac{1}{m}\sum_{r=0}^{m-1}D(m,r)\\
&+\mathcal{O}\Big(\frac{\log n}{p}+\log n\log p\exp\Big(-2^{-18}\frac{(\log K)^3}{(\log\frac{n}{pK})^2}\Big)\Big),
\end{align*}
so that using the estimate
\begin{align*}
&\Big(\sum_{j=0}^{\lfloor\log_m p^3\rfloor}+\sum_{j=\lceil\log_m\frac{n}{2K^5}\rceil}^{\lceil \log_m n\rceil}\Big)D(m,[(n-r_{k_0})/{k_0}]_j)=\mathcal{O}\Big(\lceil\log_m p^3\rceil+\lceil\log_m K^5\rceil\Big)\max_{0\leq r<m}D(m,r),
\end{align*}
we conclude
\begin{align*}
\sum_{j=0}^{\lceil\log_m n\rceil}&D(m,[(n-r_{k_0})/{k_0}]_j)<\log_m n\frac{1}{m}\sum_{r=0}^{m-1}D(m,r)\\
&+\mathcal{O}\Big(\frac{\log n}{p}+\log n\log p\exp\Big(-2^{-18}\frac{(\log K)^3}{(\log\frac{n}{pK})^2}\Big)+\log p^3+\log K^5+\log m\Big).
\end{align*}
Assuming 
$p:=\lfloor\log n/\log\log n\rfloor$ (according to our assumtion, $n\geq 3$) and $K:=\lfloor {\exp}((\log n)^{2/3}(W(3\cdot 2^{-18}\log n))^{1/3})\rfloor$, where $W(x)$ is the Lambert function (i.e. $x=W(x)e^{W(x)}$), we obtain $(\log n)/p\asymp\log p^3\asymp \log\log n$ and also $$\log n\exp\Big(-2^{-18}\frac{(\log K)^3}{(\log\frac{n}{p^2K})^2}\Big)\asymp\log n\exp\Big(-2^{-18}\frac{(\log K)^3}{(\log n)^2}\Big)\asymp \log K^5\asymp(\log n)^{2/3}(\log\log n)^{1/3}.$$ Thus,
\begin{align*}
\|n\|\leq \|n_{k_0}\|+\|k_0\|+\|r_{k_0}\|&\leq \frac{1}{m\log m}\sum_{r=0}^{m-1}D(m,r)\log n\\
&+\mathcal{O}\Big(\log \log n+\log\log n(\log n)^{2/3}(\log\log n)^{1/3}+\log m\Big)\\
&=\frac{1}{m\log m}\sum_{r=0}^{m-1}D(m,r)\log n+\mathcal O((\log n)^{2/3}(\log\log n)^{4/3}+\log m),
\end{align*}
which concludes the proof.
\end{proof}

\section{Proof of Theorem \ref{th_low}}\label{Lower}

Define $\de(n):=\|n\|-3\log_3 n$ to be the defect of the number $n$. This notion, in many occasions more reasonable to deal with rather than the notion of integer complexity itself, was introduced in \cite{AZ} and gave rise to a classification theorem {\cite[Th. 29]{AZ}} that suggested a way to inductively estimate the quantity of numbers of small defect in a given interval. In particular, the classification theorem implied {\cite[Th. 52]{AZ}} that $|\{n\leq x:\;\de(n)<r\}|\asymp_r (\log x)^{\lfloor r\rfloor +1}$. We note that it was also shown in \cite[Th. 1.5]{A}, that for any value $\delta>0$, there exists a finite set $\mathcal T_{\delta}$ of multilinear polynomials such that any number with the defect smaller than $\delta$ can be represented as the value of a polynomial in $\mathcal T_{\delta}$ at a tuple of nonnegative powers of $3$.

We are going to use the classification theorem {\cite[Th. 29]{AZ}} in a more subtle way, so that it will provide a nontrivial lower bound for the complexity of almost all numbers.

\begin{proof}[Proof of Theorem \ref{th_low}.]
Let $B$ stand for the set of all leaders, i.e. numbers $n$ that are either not divisible by $3$ or satisfy the inequality $\|n\|<\|n/3\|+3$. For a fixed $\sigma\in (0,1)$, we denote 
$$D_{k,\sigma}:=\{n:\;\de(n)\in[(k-1)\sigma,k\sigma)\}$$
and
$$U_S^{\sigma}(k,m):=|S\cap D_{k,\sigma}\cap (3^{m-1},3^m]|,\quad U_S^{\sigma}(k):={\sum_{m\geq 1}}U_S^{\sigma}(k,m),$$ 
for any set $S\subset\mathbb N$ and any $k,m\in\mathbb N$.

We say that a positive integer $n$ is multiplicatively (additively) irreducible if for any pair of numbers $a$ and $b$ with $ab=n$ ($a+b=n$) we have $\|n\|<\|a\|+\|b\|$. Following \cite{AZ}, let $T_{\sigma}$ consist of $1$ and all multiplicatively irreducible numbers $n<(3^{(1-\sigma)/3}-1)^{-1}+1$ that do not satisfy $\|n\|=\|n-b\|+\|b\|$ for any additively irreducible $1<b\leq n/2$.

For the sake of convenience, in this section, we will usually omit the dependence on $\sigma$ of the quantities introduced above and assume from the beginning that 
\begin{align}\label{de(2)}
\sigma:=0.48<4.5\de(2)=4.5\log_3(3^22^{-3})\approx 0.482. 
\end{align} 

We are going to prove by induction on $k$ that for $k\geq 3,$
\begin{align}\label{we_want}
U_B(k,m)\leq \ll\frac{(Cm)^{k-2}}{k^{k+1}},
\end{align}
where $C$ and $\ll$ will be defined later.

Observe that a number $n\in T_{0.48}$ satisfies
\begin{align*}
    n\leq\lfloor(3^{\frac{1-0.48}{3}}-1)^{-1}+1\rfloor =5,
\end{align*}
whence, noting that $\|5\|=\|2\|+\|3\|$ and $\|4\|=\|2\|+\|2\|,$ we conclude that ${T_{0.48}=\{1,2,3,5\}}$. Besides, due to {\cite[Props. 36-44, Th. 31]{AZ}}, we have 
\begin{align*}
    B\cap D_1=\{3,2,4,8,16\}\quad\text{and}\quad B\cap D_2=\{32,5,64,7,10,128,14,20,256,28,40,19\},
\end{align*}
so that $U_B(1)=5$ and $U_B(2)=12$. Moreover, one can see that for any $m\geq 1$,
\begin{align}\label{U(2,m)}
    U_B(2,m)\leq 4.
\end{align}
Observe that if $U^{\sigma}_{\mathbb N}(k,m)\neq 0$, then {there exists a number $n\leq 3^m$ with the defect at least $(k-1)\sigma$, which in light of the inequality ${\|n\|\leq 3\log_2 n}$ yields}
\begin{align*}
    (k-1)\sigma{\leq \|n\|-3\log_3 n\leq 3\log_2 n-3\log_3 n}\leq 3m(\log_2 3-1),
\end{align*}
whence $k-1\leq 1.76m/\sigma=:\tau(\sigma)m$. So, in fact we aim to show that
\begin{align}\label{aim_to}
U_B(k,m)\leq u(k,m):=\ll\chi_{k\leq \tau m+1}\frac{(Cm)^{k-2}}{k^{k+1}}\quad\text{for}\quad k\geq 3,
\end{align}
where 
\begin{align*}
    \tau:=\tau(0.48)=\frac{1.76}{0.48}=\frac{11}{3}.
\end{align*}
From now on, we assume that $C$ fulfills the following conditions:
\begin{align}\label{00}
    C\geq \frac{{178}\cdot 81}{11} \ll^{-1},
\end{align}
\begin{align}\label{cond1}
    C\geq {2^{10}3^{-5}}\tau c^{-1},
\end{align}
\begin{align}\label{cond0}
    C\geq 127(2-3c)^{-1}e^{\eta}c_1^{-1},
\end{align}
\begin{align}\label{cond2}
    C\geq {38}e^{\eta}\ll c_2^{-1},
\end{align}
\begin{align}\label{cond4}
    C\geq {702}(2-3c)^{-1}c_3^{-1},
\end{align}
\begin{align}\label{cond5}
	C^2\geq {6536}e^{\eta}c(1-c)^{-1}c_4^{-1},  
\end{align}
\begin{align}\label{cond33}
    C\geq 11124e^{\eta} c^2(1-{1.3}c^3)^{-1}(2-3c)^{-1}c_5^{-1},
\end{align}
\begin{align}\label{cond8}
	C^2\geq {286749} \ll^{-1} c_6^{-1},    
\end{align}
where the coefficients $\eta>0$, $0<c<{0.9}$, and $c_j\in(0,1),\;1\leq j\leq 6,$ will be chosen later to satisfy $\sum_{j=1}^6 c_j=1$.

Note that for $C$ satisfying \eqref{cond1}, {provided that $u(k,m)\neq 0$}, we have 
\begin{align}\label{under_cond1}
    u(k-1,m)\leq cu(k,m)
\end{align}
for $k\geq 4$ and any $m$. Indeed, if $u(k,m)\neq 0$, then
\begin{align}\label{star}
    \frac{u(k,m)}{u(k-1,m)}=\frac{Cm}{k(\frac{k}{k-1})^{k}}\geq \frac{Cm}{k}\Big({\frac{4}{3}}\Big)^{-{4}}\geq C\frac{3}{4\tau}\cdot\Big({\frac{4}{3}}\Big)^{-{4}}\geq c^{-1}.
\end{align}

Before we start with the proof of \eqref{aim_to}, let us also put some restrictions on $m$. Namely, we are going to discard those values of $m$ for which the trivial estimate $U_B(k,m)\leq 3^m$ gives a better bound than that of \eqref{aim_to}. For this, we will assume that 
\begin{align}\label{2-600}
    \lambda\geq 2.5,\quad C\geq 780.
\end{align}
Then, in order to have $\ll(Cm)^{k-2}/k^{k+1}\leq 3^m$, for $k=3,4,5,6,7,8,9,$ it is necessary that $2.5(780m)^{k-2}/k^{k+1}\\\leq 3^m$, which yields $m\geq 5,12,19,26,33,41,48$, respectively. For $k\geq 10,$ we have $k^3\leq 2.5^{k-2}$, 
 whence as $C\geq 780$, it is necessary that $2.5(312m/k)^{m/\nu}\leq 3^m,$ where $\nu:=m/(k-2)$. Noting that for $k\geq 10,$ $\nu\leq 1.25m/k$, and checking therefore the condition $2.5(249.6\nu)^{m/\nu}\leq 3^m,$ we see that {there must hold $(249.6\nu)^{1/\nu}\leq 3,$ and thus either $\nu> 6.76$ or $\nu<0.005$. However, if $\nu<0.005$, then $m\leq 0.005(k-2)<(k-1)\tau,$ which is a contradiction, so that $\nu>6.76$.} Comparing this with the numerics for $k\leq 9$, we conclude that 
\begin{align}\label{eta}
    k-2\leq \frac{1}{6}m=:\eta m,\quad k\leq \frac{1}{3} m\qquad\text{for}\;k\geq 4.
\end{align}

Let us postpone the proof of the induction base for now and start with the induction step from $k-1$ to $k\geq 4$. 

According to {\cite[Th. 29]{AZ}}, for any $\sigma\in (0,1),$ any number $n\in B\cap D_k\setminus T_{\sigma},\;k\geq 3,$ can be efficiently represented in one of the following three forms:

(1) $n=uv$ with $u\in T_{\sigma},\;v\in B\cap D_1$.

(2) $n=uv$ with $u\in B\cap D_i,\;v\in B\cap D_j,\;i,j\leq k-1,\;i+j\leq k+1$.

(3) $n=(a+b)v$ with $v\in B\cap D_1\cup\{1\}$, $a\in \cup_{l\leq k-1}D_{l}$, and $b\leq a$ being an additively irreducible number with $\|b\|<k\sigma+3\log_32-\de(a)$.

For our choice \eqref{de(2)} of $\sigma=0.48$, let us estimate the number of representations $N_i=N_i(k,m),\;1\leq i\leq 3,$ of these three forms separately. First of all, we notice that $B\cap D_k\setminus T_{0.48}=B\cap D_k$.

\textbf{Case 1.} Observe that this case is impossible for $k\geq {4}$ and our choice of $\sigma=0.48$. Indeed,
\begin{align*}
    \{uv:\;u\in T_{0.48}\setminus\{1\},\;v\in B\cap D_1\}&=\{uv:\;u\in\{2,3,{5}\},\;v\in\{3,2,4,8,16\}\}\subset D_1\cup D_2\cup\{80\}.
\end{align*}
Hence, {as $80\in D_3$, we have} $N_1=0$ {whenever $k\geq 4$}.

\textbf{Case 2.}  Let $\widetilde{U}_B(p):=\max_l\sum_{s\leq p}U_B(s,l)$. For $l\geq 4$, we have $U_B(1,l)+U_B(2,l)=U_B(2,l)\leq 4$ by \eqref{U(2,m)}, while for $l<4$, one can check that $U_B(1,l)+U_B(2,l)\leq 5$, whence $\widetilde{U}_B(2)=5$. Also let $Q_{p}$ be the maximum over $l\in\mathbb N$ of the quantities of numbers in $D_p\cap(3^{l-1},3^l]$ that can be represented as $uv$ with $u,v\in B\cap(D_1\cup D_2)$. A direct computation, using the online calculator by J. Iraids (see \cite{Sl}), shows that $Q_p=0$ for $p\geq {5}$ and $Q_3=8,\;Q_4=9$. 
 
 {Note that the equality $n=uv$ with $u\in(3^{m_1-1},3^{m_1}]\cap D_{k_1}$ and $v\in(3^{m_2-1},3^{m_2}]\cap D_{k_2}$ implies $n\in (3^{m_1+m_2-2},3^{m_1+m_2}$, whence $m\leq m_1+m_2\leq m+1$, and also that $(k_1+k_2-2)\sigma\leq \de(n)=\de(u)+\de(v)<k_1\sigma+k_2\sigma$, so that $k\leq k_1+k_2\leq k+1$.} Thus, we obtain the bound
\begin{align*}
N_2&\leq \sum_{\underset{1\leq k_2\leq k_1\leq k-1}{{k\leq}k_1+k_2\leq k+1}}\sum_{\underset{1\leq m_1,m_2\leq m}{m\leq m_1+m_2\leq m+1}}U_B(k_1,m_1)U_B(k_2,m_2)\nonumber\\
    &\leq Q_k+\sum_{k_1=3}^{k-1}\sum_{m_1=2}^m\sum_{m_2=m-m_1}^{m-m_1+1}U_B(k_1,m_1)\widetilde{U}_B(2)+\sum_{\underset{3\leq k_2\leq k_1\leq k-1}{{k\leq}k_1+k_2\leq k+1}}\sum_{\underset{1\leq m_1,m_2\leq m}{m\leq m_1+m_2\leq m+1}}U_B(k_1,m_1)U_B(k_2,m_2)\nonumber\\
    &=:Q_k+P_1+P_2.
\end{align*} 
{In light of \eqref{under_cond1}, we have $\frac{(Cm)^{k-3}/(k-1)^k}{(Cm)^{k-2}/k^{k+1}}\leq c $
whenever $k\leq \tau m+1$. Therefore, if $4\leq k_1\leq k-1$ and $k\leq \tau m+1$, then $k_1\leq\tau m\leq\tau(m+1)+1$, so that
\begin{align*}
    \frac{(C(m+1))^{k_1-3}/((k_1-2)(k_1-1)^{k_1})}{(C(m+1))^{k_1-2}/((k_1-1)k_1^{k_1+1})}\leq c\frac{k_1-1}{k_1-2}\leq \frac{3c}{2}.
\end{align*}
Thus,
\begin{align*}
   \sum_{k_1=3}^{k-1}\frac{(C(m+1))^{k_1-2}}{(k_1-1)(k_1^{k_1+1})}\leq \frac{(C(m+1))^{k-3}}{(k-2)(k-1)^k}\Big(1+\frac{3c}{2}+\Big(\frac{3c}{2}\Big)^2+...\Big)=\frac{1}{1-\frac{3c}{2}}\frac{(C(m+1))^{k-3}}{(k-2)(k-1)^k},
\end{align*}
and} using the induction hypothesis, 
we get
\begin{align*}
    P_1&=2\widetilde{U}_B(2)\sum_{k_1=3}^{k-1}\sum_{m_1=2}^mU_B(k_1,m_1)\leq {10\sum_{k_1=3}^{k-1}\sum_{m_1=2}^m\ll\frac{C^{k_1-2}(m+1)^{k_1-2}}{k_1^{k_1+1}}}\\
    &\leq 10\sum_{k_1=3}^{k-1}\ll\frac{C^{k_1-2}(m+1)^{k_1-{1}}}{(k_1-1)k_1^{k_1+1}}    \leq 10\frac{1}{1-\frac{3c}{2}}\ll\frac{C^{k-3}(m+1)^{k-2}}{(k-2)(k-1)^{k}}\\
    &\leq 10\frac{1}{1-\frac{3c}{2}}\cdot \frac{k}{k-2}\cdot\Big(\frac{k}{k-1}\Big)^k\ll\frac{C^{k-3}(m+1)^{k-2}}{k^{k+1}}\\
    &\leq 10\frac{1}{1-\frac{3c}{2}}\cdot \frac{k}{k-2}\cdot\Big(\frac{k}{k-1}\Big)^ke^{\eta}\ll\frac{C^{k-3}m^{k-2}}{k^{k+1}},
\end{align*}
since $((m+1)/m)^{k-2}\leq e^{\eta}$ (recall \eqref{eta}). Noting that $(x/(x-2))(x/(x-1))^x\leq 2(x/(x-1))^x\leq  2\cdot(4/3)^4$ for $x\geq 4$, we conclude
\begin{align*}
       P_1\leq 10\frac{2^93^{-4}}{1-\frac{3c}{2}}e^{\eta}\ll\frac{C^{k-3}m^{k-2}}{k^{k+1}}\leq c_1\ll\frac{C^{k-2}m^{k-2}}{k^{k+1}}
\end{align*}
due to \eqref{cond0}.

For $P_2$, we have 
\begin{align*}
P_2&\leq \sum_{k_1=3}^{k-2}\sum_{k_2={\max}(3,k-k_1)}^{k-k_1+1}\sum_{m_1=1}^m\sum_{m_2=m-m_1}^{m-m_1+1}u(k_1,m_1)u(k_2,m_2)\\
&\leq 4\cdot \ll^2\sum_{k_1=3}^{k-2}\sum_{m_1=1}^m\frac{(Cm_1)^{k_1-2}}{k_1^{k_1+1}}\frac{(C(m-m_1+1))^{k-k_1-1}}{(k-k_1+1)^{k-k_1+2}}.
\end{align*}
{Note that the function $x\mapsto x^{k_1-2}(m+1-x)^{k-k_1-1}$ has two intervals of monotonicity in $[0,m+1]$, so that we can estimate}
\begin{align*}
    {\sum_{m_1=1}^m m_1^{k_1-2}(m-m_1+1)^{k-k_1-1}}&{<\int_0^{m+1}x^{k_1-2}(m+1-x)^{k-k_1-1}dx+\max_{x\in[0,m+1]}x^{k_1-2}(m+1-x)^{k-k_1-1}}\\
    &{=(m+1)^{k-2}B(k_1-1,k-k_1)}\\
    &{+\Big(\frac{m+1}{k-3}\Big)^{k-3}(k_1-2)^{k_1-2}(k-k_1-1)^{k-k_1-1}.}
\end{align*}
{According to this inequality, we can write $P_2<P_{21}+P_{22}$ and}
using that $B(k_1-1,k-k_1)=\Gamma(k_1-1)\Gamma(k-k_1)/\Gamma (k-1)$, 
 that for any positive integer $l$, $(l/e)^l\sqrt{2\pi l}\leq l!\leq (l/e)^l\sqrt{2\pi l}e^{1/12}$, {and that $(x-2)/(k-2)< x/k$ for all $x\in(0,k)$,} further obtain
\begin{align}\label{P_21}
P_{21}&\leq 4\cdot \ll^2 C^{k-3}(m+1)^{k-2}\sum_{k_1=3}^{k-2}\frac{\Gamma (k_1-1)\Gamma(k-k_1)}{\Gamma(k-1)}\frac{1}{k_1^{k_1+1}(k-k_1+1)^{k-k_1+2}}\nonumber\\
&\leq 4\cdot \ll^2e^{{7}/6}\sqrt{2\pi}C^{k-3}(m+1)^{k-2}\sum_{k_1=3}^{k-2}\frac{(k_1-2)^{k_1-3/2}(k-k_1-1)^{k-k_1-1/2}}{(k-2)^{k-3/2}k_1^{k_1+1}(k-k_1+1)^{k-k_1+2}}\nonumber\\
&\leq 4{e^{\eta}}\sqrt{2}\cdot \ll^2e^{{7}/6}\sqrt{2\pi}C^{k-3}m^{k-2}\frac{1}{k^{k-3/2}}\sum_{k_1=3}^{k-2}\frac{1}{k_1^{5/2}(k-k_1+1)^{5/2}},
\end{align}
as $((m+1)/m)^{k-2}\leq e^{\eta}$ and {$(k-2)^{-1/2}\leq \sqrt{2}k^{-1/2}$} for $k\geq 4$. 

{For $P_{22}$, we write}
\begin{align}\label{P_22}
{P_{22}}&{\leq {4} \ll^2 C^{k-3}\frac{(m+1)^{k-3}}{(k-3)^{k-3}}\sum_{k_1=3}^{k-2}\frac{(k_1-2)^{k_1-2}(k-k_1-1)^{k-k_1-1}}{k_1^{k_1+1}(k-k_1+1)^{k-k_1+2}}}\nonumber\\
&{<{4} \ll^2 C^{k-3}\frac{(m+1)^{k-3}}{(k-1)^{k-3}}\sum_{k_1=3}^{k-2}\frac{1}{k_1^{3}(k-k_1+1)^{3}}}\nonumber\\
&{\leq {4} \ll^2 C^{k-3}e^{\eta+1}\cdot\frac{1}{3}\cdot\frac{m^{k-2}}{k^{k-2}}\sum_{k_1=3}^{k-2}\frac{1}{k_1^{3}(k-k_1+1)^{3}}}
\end{align}
{in light of \eqref{eta} and the inequalities $(k/(k-1))^{k-3}\leq e$ and $((m+1)/m)^{k-2}\leq e^{\eta}$.}

To terminate the estimate for $P_1$, we will need the following

\begin{lemma}\label{lem_new}
    {We have}
    \begin{align*}
        {k^{\alpha}\sum_{k_1=3}^{k-2}\frac{1}{k_1^{\alpha}(k-k_1+1)^{\alpha}}<}\begin{cases}
            {0.71,}&{ \alpha=5/2,}\\
            {1.{5},}&{\alpha =3.}
        \end{cases}
    \end{align*}
\end{lemma}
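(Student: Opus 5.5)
We need to bound $S_\alpha(k):=k^{\alpha}\sum_{k_1=3}^{k-2}k_1^{-\alpha}(k-k_1+1)^{-\alpha}$ uniformly in $k\geq 4$. The sum is empty (zero) for $k\le 4$, so assume $k\geq 5$. Set $j=k_1$ and $j'=k-k_1+1$. Then $j+j'=k+1$ and $j,j'\geq 3$, and the summand is symmetric under $j\leftrightarrow j'$.

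The key identity is the splitting
\begin{align*}
\frac{k+1}{jj'}=\frac1j+\frac1{j'},\qquad\text{so}\qquad \frac{k^\alpha}{j^\alpha j'^\alpha}\le\Big(\frac{k+1}{jj'}\Big)^{\alpha}=\Big(\frac1j+\frac1{j'}\Big)^{\alpha}.
\end{align*}
By symmetry it suffices to sum over $j\le j'$ (doubling, and not double counting the middle term) and to use $\frac1j+\frac1{j'}\le \frac1j(1+\frac{j}{j'})$. When $j\le (k+1)/2$ we have $j/j'\le 1$, so the $j$-th term is at most $j^{-\alpha}(1+j/j')^\alpha$. This bound is small for the smallest $j$ but is weak (factor $2^\alpha$) near the middle.

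The plan is therefore the following.

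(i) Compute $S_\alpha(k)$ exactly for $5\le k\le k_0$, with $k_0$ around $30$ or $40$, and check that the values stay below $0.71$ and $1.5$ respectively. The maximum appears to occur at moderate $k$.

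(ii) For $k>k_0$, bound the sum by
\begin{align*}
2\sum_{3\le j\le (k+1)/2}\Big(\frac{k}{k+1-j}\Big)^{\alpha}j^{-\alpha}.
\end{align*}
For $j\le J$ with a fixed cutoff $J$ (say $J=6$), use $\big(\frac{k}{k+1-j}\big)^\alpha\le\big(\frac{k_0}{k_0+1-j}\big)^\alpha$, which is close to $1$. For $J<j\le (k+1)/2$, use $\frac{k}{k+1-j}\le 2$ and compare the tail with an integral:
\begin{align*}
2^{\alpha}\sum_{j>J}j^{-\alpha}\le 2^{\alpha}\frac{(J+1/2)^{1-\alpha}}{\alpha-1}.
\end{align*}
The limit of $S_\alpha(k)$ as $k\to\infty$ is $2\zeta(\alpha)-2-2^{1-\alpha}\cdot 2$, that is $2\sum_{j\ge3}j^{-\alpha}$, which gives about $0.33$ for $\alpha=5/2$ and about $0.15$ for $\alpha=3$. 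So there is plenty of room asymptotically, and the stated constants must come from the small and moderate range of $k$.

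The main obstacle is making the tail estimate in (ii) tight enough that it joins up with the finite computation in (i). The factor $2^\alpha$ on the middle terms is wasteful. If the crude bound does not meet $0.71$ at the chosen $k_0$, I would instead bound the middle range by $\frac{k^\alpha}{((k+1)/2)^{\alpha}}\cdot\#\{\text{terms}\}\cdot\min(j)^{-\alpha}$ more carefully, or treat the sum as a Riemann sum of the convex function $x\mapsto x^{-\alpha}(k+1-x)^{-\alpha}$. Alternatively, I would simply raise $k_0$ in the numerical verification until the analytic tail bound suffices.
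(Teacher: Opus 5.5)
Your route is valid but different from the paper's. The paper never tabulates individual $k$. It uses that $x\mapsto x^{-\alpha}(k+1-x)^{-\alpha}$ decreases on $(0,(k+1)/2)$ and increases on $((k+1)/2,k+1)$ to dominate the whole sum by $k^{\alpha}\int_2^{k-1}x^{-\alpha}(k+1-x)^{-\alpha}\,dx$. It evaluates this integral in closed form after the substitutions $y=x/(k+1)$, $z=y/(1-y)$. For $\alpha=5/2$ it then maximizes one explicit function of $s=\sqrt{k-1}$, which has a single critical point at $k\approx 11.1$; for $\alpha=3$ a crude bound on the antiderivative suffices. This yields one closed-form majorant valid for all $k$ at the cost of a single numerical root, but the majorant is lossy. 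For $\alpha=5/2$ it peaks at about $0.70$, barely under $0.71$, and tends to $2^{2-\alpha}/(\alpha-1)\approx0.47$. The sum itself never exceeds about $0.52$ (near $k=13$) and tends to your limit $2\sum_{j\ge3}j^{-5/2}\approx0.33$. Your table-plus-tail scheme exposes this slack and needs no calculus, but it relies on a few dozen explicit evaluations. (Small slip: $2\sum_{j\ge3}j^{-\alpha}=2\zeta(\alpha)-2-2^{1-\alpha}$, not $-2^{1-\alpha}\cdot 2$; your numerical values are the correct ones.)

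The parameters you suggest do not close the case $\alpha=5/2$. With $J=6$, the tail term $2\cdot2^{5/2}(6.5)^{-3/2}/(3/2)$ alone is about $0.455$. With $k_0=40$ the head adds about $0.300$, for a total of about $0.755>0.71$. Raising $k_0$ with $J=6$ fixed barely helps, because the $k_0\to\infty$ limit of your majorant is $2\sum_{j=3}^{6}j^{-5/2}+0.455\approx0.704$; you would need $k_0$ of roughly $320$. The effective fix is to enlarge $J$. With $J=10$ and $k_0=40$ the head is about $0.367$ and the tail about $0.222$, giving a total of about $0.59$, and the table for $5\le k\le40$ stays below $0.52$. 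For $\alpha=3$ you need no table at all: since $k/(k+1-j)<2$ for $j\le(k+1)/2$, your symmetric bound gives $S_3(k)\le16\sum_{j\ge3}j^{-3}=16(\zeta(3)-9/8)\approx1.23<1.5$ for every $k$.
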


\begin{proof}
{Since the function $x^{-\alpha}(k-x+1)^{-\alpha}$ is decreasing on $(0,(k+1)/2)$ and increasing on $(k+1)/2, k+1)$,  
 the sum we need to bound is less than}
\begin{align*}
    {F_{\alpha}(k):=k^{\alpha}\int_{2}^{k-1}x^{-\alpha}(k-x+1)^{-\alpha}dx.} 
\end{align*}
Making two changes of variable $y:=x/(k+1)$ and $z:=y/(1-y)$, we derive
\begin{align*}
    {F_{\alpha}(k)=k^{\alpha}(k+1)^{-2\alpha+1}\int_{\frac{2}{k+1}}^{\frac{k-1}{k+1}}y^{-\alpha}(1-y)^{-\alpha}dy=k^{\alpha}(k+1)^{-2\alpha+1}\int_{\frac{2}{k-1}}^{\frac{k-1}{2}}z^{-\alpha}(z+1)^{2\alpha-2}dz,}
    \end{align*}
    so that
    \begin{align*}
        F_{5/2}(k)&=k^{5/2}(k+1)^{-4}{\int_{\frac{2}{k-1}}^{\frac{k-1}{2}}}(z^{1/2}+3z^{-1/2}+3z^{-3/2}+z^{-5/2})dz\\ 
    &=2k^{5/2}(k+1)^{-4}\Big(\frac{(k-1)^{3/2}}{3\sqrt{2}}+3\sqrt{2}(k-1)^{1/2}-{6}\sqrt{2}(k-1)^{-1/2}-\frac{{4\sqrt{2}}(k-1)^{-3/2}}{3}\Big)\\
    &=2(s^2+1)^{5/2}(s^2+2)^{-4}\Big(\frac{s^{3}}{3\sqrt{2}}+3\sqrt{2}s-{6}\sqrt{2}s^{-1}-\frac{{4\sqrt{2}}s^{-3}}{3}\Big)=:\widetilde{F}_{5/2}(s),
\end{align*}
where $s=s(k)=\sqrt{k-1}$. The only point {$x>0$} of $\widetilde{F}_{5/2}'(x)=0$ is $x=x_0\approx 3.18{2}$, so that verifying 
{$F_{5/2}(x)<0.71$} for $k=4$ and $k=x_0^2+1\approx 11.126$ concludes the proof of {the lemma for $\alpha=5/2$}.

{For $\alpha=3$, {using that $k\geq 5$}, we get}
\begin{align*}
    {F_{3}(k)=k^{3}(k+1)^{-5}(0.5z^{2}+4z{-\frac{4}{z}-\frac{z^2}{2}}+6\log z)\Big|_{\frac{2}{k-1}}^{\frac{k-1}{2}}<{2}k^{-2}\Big(\frac{k^2}{8}+2k+{6}\log\frac{k}{2}\Big)<{1.5}.}
\end{align*}
\end{proof}

Estimating {the sums in \eqref{P_21} and \eqref{P_22} according to Lemma \ref{lem_new},} 
 we arrive at
\begin{align*}
    P_2&{< P_{12}+P_{22}\leq e^{\eta}\lambda\Big(\sqrt{2}\cdot 4\cdot 0.71 e^{{7}/6}\sqrt{2\pi}+\frac{{4}e\cdot{1.5}}{3}\Big)\cdot \ll\frac{C^{k-3}m^{k-2}}{k^{k+1}}}\leq c_2\cdot \ll\frac{(Cm)^{k-2}}{k^{k+1}}
\end{align*}
under condition \eqref{cond2}.

Finally,
\begin{align*}
    N_2\leq Q_k+(c_1+c_2)\cdot\ll\frac{(Cm)^{k-2}}{k^{k+1}}.
\end{align*}

\textbf{Case 3.} Note that $v\in \{1\}\cup B\cap D_1\setminus\{3\}=\{2^s,\;0\leq s\leq 4\}=:V$ ($v\neq 3$, since otherwise $n=(a+b)v\in B$ with $\|n\|=\|3\|+\|n/3\|$, which is a contradiction).

Besides, $nv^{-1}\in(3^{m(v)-2},3^{m(v)}]$, where $m(v)=m-\lfloor \log_3 v\rfloor$, and if $a\in(3^{m_1-1},3^{m_1}]$ and $b\in(3^{m_2-1},3^{m_2}]$, then
\begin{align*}
    \de(nv^{-1})-\de(a)-\de(b)&=\|nv^{-1}\|-\|a\|-\|b\|+3\log_3 \frac{abv}{n}\\
    &=3\log_3 \frac{abv}{n}\geq 3(m_1+m_2-m(v)-2),
\end{align*}
{so that
\begin{align*}
    k\sigma>\de(n)\geq\de(nv^{-1})\geq \de(a)+\de(b)+ 3(m_1+m_2-m(v)-2)
\end{align*}
and thus, as $a\geq nv^{-1}/2$ and therefore $m_1\geq m(v)-1$,
\begin{align}\label{k_sig}
    \de(a)&\leq k\sigma-3(m_1+m_2-m(v)-2)\nonumber\\
    &<(k-3(m_1+m_2-m(v)-2))\sigma\leq (k-3(m_2-3))\sigma.
\end{align}
}
We divide this case into two separate ones corresponding to $b\leq 27$ and $b>27$ and denote the numbers of such representations, respectively, as $N_{31}$ and $N_{32}$. We emphasize that here we are not going to use the bound on $\|b\|$ given above in the description of decomposition (3). For the representations corresponding to $b\leq 27$, taking into account that $|V|=5$ and that $b$ must be additively irreducible (whence, by a direct check, $b\in Z:=\{1,6,8,9,12,14,15,16,18,20,21,24,26,27\}$ with $|Z|= 14$), we write 

\begin{align*}
    N_{31}&\leq \sum_{b\in Z}\sum_{v\in V}\sum_{p=1}^{k-1}|D_p\cap(3^{m-1}v^{-1}-b,3^mv^{-1}-b]|=\sum_{b\in Z}\sum_{v\in V}\Big(\sum_{p=1}^2+\sum_{p=3}^{k-1}\Big)=:L_1+L_2.
    \end{align*}
    Observe that if $3\mid b$, {then} 
     $a$ must be a leader. {Indeed, it can be seen that any number $3\mid x\leq 27$ can be efficiently represented as $x=3\cdot (x/3)$, whence if $a$ is not a leader, then the number $a+b=3(a/3+b/3)$ has complexity less than $\|a\|+\|b\|$, which is a contradiction}. Moreover, one  can check that the maximal number of elements in $B\cap (D_1\cup D_2)$ contained in an interval of the form $(x,4x]$ is $7$ ($\{5,7,8,10,14,16,19\}\subset (4.75,19]$) and in light of \eqref{eta}, $m\geq 12$, so that $4(3^{m-1}v^{-1}-b)>3^mv^{-1}-b$. Thus, noting that $|Z\cap 3\mathbb Z|=8$, we obtain
\begin{align}\label{L_1}
    L_1\leq \sum_{v\in V}\Big(\sum_{3\mid  b}7+\sum_{3\nmid  b} {2\cdot} 17\Big)=5\cdot(8\cdot 7+6\cdot {2\cdot} 17)={1300}.
\end{align}
Also, using that $U_{\mathbb N}(s,l)\leq \sum_{r\leq l}U_B(s,r)$ {(since any number $n$ has a unique leader $n_0$ asigned to it with $n/n_0$ being a power of $3$) and that $a\in (3^{m(v)-3},3^{m(v)}]$ (since $a\leq nv^{-1}$ and $a\geq (a+b)/2=nv^{-1}/2$, with $nv^{-1}\in (3^{m(v)-2},3^{m(v)}]$), we have} 
    \begin{align*}
    L_2&\leq \sum_{v\in V}\sum_{p=3}^{k-1}\Big(\sum_{3\mid b\in Z}\sum_{m_1=m(v)-2}^{m(v)} U_{B}(p,m_1)+\sum_{3\nmid b\in Z}\sum_{m_1=1}^m U_{B}(p,m_1) \Big)\\
    &\leq 5\sum_{p=3}^{\min(k-1,\tau m+1)}\Big(8\cdot 3\ll\frac{(Cm)^{p-2}}{p^{p+1}}+6\ll\frac{C^{p-2}(m+1)^{p-1}}{(p-1)p^{p+1}}\Big)\\
    &\leq \frac{120}{1-c}\ll\frac{(Cm)^{k-3}}{(k-1)^k}+\frac{30e^{\eta}}{1-\frac{3c}{2}}\cdot \ll\frac{C^{k-3}m^{k-2}}{(k-2)(k-1)^{k}}\\
     &\leq \Big(30e^{\eta}+\frac{120}{6}\Big)\frac{1}{1-\frac{3c}{2}}\cdot \ll\frac{C^{k-3}m^{k-2}}{(k-2)(k-1)^{k}}\\
     &< \frac{{55.5}}{1-\frac{3c}{2}}\cdot 2\cdot \Big(\frac{4}{3}\Big)^4\cdot \ll\frac{C^{k-3}m^{k-2}}{k^{k+1}}\leq c_3\cdot \ll\frac{(Cm)^{k-2}}{k^{k+1}} 
\end{align*}
in light of condition \eqref{cond4} and the inequality $((m+1)/m)^{k-2}\leq e^{\eta}$ (recall \eqref{eta}), where in the third to last line we used \eqref{under_cond1}. Thus,
\begin{align*}
    N_{31}\leq L_1+L_2\leq {1300}+c_3\cdot \ll\frac{(Cm)^{k-2}}{k^{k+1}}.
\end{align*}

Turn now to the case of $b>27$, i.e. to estimating $N_{32}$. {We will divide this case into three subcases: those of $\de(a),\de(b)<2\sigma,\;\de(a)<2\sigma\leq \de(b)$, and $\de(a)\geq 2\sigma$. Observe that for any $s$, there are at most $|B\cap (D_1\cup D_2)|=17$ numbers in $(3^s,3^{s+1}]$ with the defect less than $2\sigma$.}
 Taking into account this observation {along with \eqref{k_sig} and the facts that $a\in (3^{m_1-1},3^{m_1}]\subset (3^{m(v)-3},3^{m(v)}]$ and $b\in(3^{m_2-1},3^{m_2}]$, $m_2\leq m_1\leq m$,} we get 
\begin{align*}
    N_{32}&\leq {\sum_{v\in V}\sum_{m_2=4}^{m}3\cdot 17\cdot 17}+\sum_{v\in V}\sum_{m_2=4}^{m}\sum_{p=3}^{k-3(m_2-3)}51U_{\mathbb N}(p,m_2)\\
    &+\sum_{v\in V}\sum_{m_1=m(v)-2}^{m(v)}\sum_{m_2=4}^{m_1}\sum_{p=3}^{k-3(m_1+m_2-m(v)-2)}U_{\mathbb N}(p,m_1)3^{m_2}\\
    &=: {4335(m-3)}+M_1+M_2,
\end{align*}
{where we used that the number of $b$'s in $(3^{m_2-1},3^{m_2}]$ is less than $3^{m_2}$}. Also, the inequality 
 $k-3(m_1+m_2-m(v)-2)\leq k-3(m_2-3)\leq k-3$ implies that if $k<{6}$, then ${M_1=M_2}=0$, whence from now on we assume $k\geq {6}$ for the estimates of $M_1$ and $M_2$. 

For $M_1$, in light of \eqref{under_cond1}, we obtain
\begin{align*}
    M_1 &\leq 255\sum_{m_2=4}^m \sum_{p=3}^{\min(k-3(m_2-3),\tau m_2+1)}\ll\frac{C^{p-2}m_2^{p-1}}{(p-1)p^{p+1}}\\
  &\leq 382.5\sum_{m_2=4}^m \sum_{p=3}^{\min(k-3(m_2-3),\tau m_2+1)}\ll\frac{C^{p-2}m_2^{p-1}}{p^{p+2}}\\
  &= 382.5\sum_{p=3}^{k-3}\sum_{m_2={\max(4},\lceil (p-1)/\tau\rceil)}^{\min(m,(k+9-p)/3)}\ll\frac{C^{p-2}m_2^{p-1}}{p^{p+2}}\\
  &\leq  382.5\sum_{p=3}^{k-3}\ll\frac{C^{p-2}(m+1)^{p}}{p^{p+3}}\leq 382.5e^{\eta}\cdot\frac{c}{1-c}\cdot \ll\frac{C^{k-4}m^{k-2}}{(k-2)^{k+1}}\\
  &\leq \frac{382.5e^{\eta}c}{1-c}\cdot{\Big(\frac{6}{4}\Big)^7}\cdot \ll\frac{C^{k-4}m^{k-2}}{k^{k+1}}\leq c_4\cdot \ll\frac{(Cm)^{k-2}}{k^{k+1}},
\end{align*}
{since $(k/(k-2))^{k+1}$ is decreasing} and due to \eqref{cond5}.

Next,
\begin{align*}
    M_2&\leq 15\sum_{m_2=4}^{\min(m,(k+{6})/3)}3^{m_2}\sum_{p=3}^{k-3(m_2-3)}\sum_{m_3=1}^m\ll\frac{(Cm_3)^{p-2}}{p^{p+1}}\\
    &\leq 15\sum_{m_2=4}^{\min(m,(k+{6})/3)}3^{m_2}\sum_{p=3}^{k-3(m_2-3)}\ll\frac{C^{p-2}(m+1)^{p-1}}{(p-1)p^{p+1}}\\
      &\leq 15e^{\eta}\sum_{m_2=4}^{\min(m,(k+{6})/3)}3^{m_2}\sum_{p=3}^{k-3(m_2-3)}\ll\frac{C^{p-2}m^{p-1}}{(p-1)p^{p+1}}\\
     &\leq \frac{22.5e^{\eta}}{1-\frac{3c}{2}}\cdot \ll\sum_{m_2=4}^{\min(m,(k+{6})/3)}3^{m_2}\frac{C^{k-3(m_2-3)-2}m^{k-3(m_2-3)-1}}{(k-3(m_2-3))^{k-3(m_2-3)+2}}\\
     &=:\frac{22.5e^{\eta}}{1-\frac{3c}{2}}\cdot \ll\sum_{m_2=4}^{\min(m,(k+{6})/3)}G(k,m,m_2).
\end{align*}
{Note that $$\Big((x+2)\log\frac{x+3}{x}\Big)'=\log\frac{x+3}{x}-\frac{3(x+2)}{x(x+3)}<\frac{3}{2x}-\frac{3(x+2)}{x(x+3)}<0$$ for $x\geq 3$, whence the function $((x+3)/x)^{x+2}$ is decreasing for $x\geq 3$. Thus, if $4\leq l\leq (k+6)/3-1$, so that $k-3(l-2)\geq 3$, in light of \eqref{cond1},} we have
\begin{align*}
\frac{G(k,m,l)}{G(k,m,l+1)}> &{\frac{C^3m^3}{3(k-3(l-3))^3}\Big(\frac{k-3(l-2)}{k-3(l-2)+3}\Big)^{k-3(l-2)+2}}\\
&{\geq \frac{C^3m^3}{2^5 3(k-3)^3}>\frac{(2^{10}3^{-5}c^{-1})^3}{96}>\frac{1}{1.3c^3}.}
\end{align*} 
whence, since {$c<0.9<1.3^{-1/3}$}
\begin{align*}
    M_2&\leq \frac{1}{1-420c^3}\cdot\frac{22.5e^{\eta}}{1-\frac{3c}{2}}\cdot 3^{4}\cdot \ll\frac{C^{k-5}m^{k-4}}{(k-3)^{k-1}}\\
    & \leq \Big(\frac{5}{4}\Big)^5c^2\frac{1}{1-420c^3}\cdot\frac{22.5e^{\eta}}{1-\frac{3c}{2}}\cdot 3^{4}\cdot \ll\frac{C^{k-3}m^{k-2}}{k^{k+1}}\leq c_5\cdot \ll\frac{(Cm)^{k-2}}{k^{k+1}}
\end{align*}
according to \eqref{under_cond1} and \eqref{cond33}. Hence, $N_{32}\leq {4335(m-3)}+M_1+M_2\leq (c_4+c_5)\cdot \ll(Cm)^{k-2}/k^{k+1}$.

Note that due to \eqref{eta}) we can assume $m\geq 12$ for $k\geq 4$, so that 
$$c_6\ll \frac{(Cm)^{k-2}}{k^{k+1}}\geq c_6\ll \frac{(Cm)^{2}}{4^5}{\geq \frac{1309m^2}{12^2}+\frac{4335m^2\cdot 9}{12^2}\geq 1309 +4335(m-3)}\geq Q_k+{1300}+{4335(m-3)}$$
according to \eqref{cond8} and the fact that $Q_k\leq 9$. Taking this into account and combining all the estimates above, we arrive at
\begin{align*}
U_B(k,m)&\leq N_1+N_2+N_{31}+N_{32}\\
    &\leq Q_k+{1300}+{4335(m-3)}+\sum_{j=1}^5c_j\cdot \ll\frac{(Cm)^{k-2}}{k^{k+1}}\\
    & \leq \sum_{j=1}^6c_j\cdot \ll\frac{(Cm)^{k-2}}{k^{k+1}}= \ll\frac{(Cm)^{k-2}}{k^{k+1}},
\end{align*}
which completes the induction step.

Turn now to the base of induction. {Note that $3^m<2.5(780m)/3^4$ whenever $m\leq 4$, so that we can assume now that $m\geq 5$.} Let us use once again decompositions (1)-(3) that led us to Cases 1-3 above. {Note that the only number obtained in Case 1 is $80=5\cdot 16,$ where $5,16\in B\cap (D_1\cup D_2)$, so we will take this number into account in $Q_3$ below when dealing with Case 2}. Using that in decomposition (3) for our case we have $\|b\|<3\cdot 0.48+3\log_32<4$, we conclude that {$b=1$, since $2$ and $3$ are not additively irreducible.} 
Recalling also the argument before \eqref{L_1}, which remains true for $m= 11$, {so that, given $v\in V$ and $x\in B\cap (D_1\cup D_2)$, there are at most two values for $a$ of the form $3^tx$}, we derive 
\begin{align*}
    U_B(3,m)&\leq Q_3+\sum_{v\in V}\Big(|{(}D_1\cup D_2{)}\cap (3^{m-1}v^{-1}-1,3^m v^{-1}-1]|\\
    &\leq 8+ {5\cdot 2\cdot |B\cap(D_1\cup D_2)|=8+5\cdot 2\cdot 17=178}\leq \ll\frac{Cm}{3^4}
\end{align*}
 due to \eqref{00} for $m\geq 11$. For $m=5,...,10$, we check directly\footnote{Access the code $``$defect\_check\_local.py$"$ via the \Colorhref{https://drive.google.com/drive/folders/1Dse6fijPlJLgI_mJROqsm3zeMFVFC20q?usp=drive_link}{\underline{link}}.} that $U_{{\mathbb{N}}}(3,m)=36,55,73,89,105,120$, respectively, so that for all of these values of $m$, we have $U_{{B}}(3,m)/m< {178}/11$, which completes the proof of the induction base.

Thus, \eqref{we_want} is proved. 

Finally, taking $\gamma>0$ that satisfies $\gamma\sigma^{-1}\log_3 C\sigma\gamma^{-1}<1$ and using \eqref{we_want}, we get  
\begin{align*}
|\{n\leq x:{2\sigma\leq} \de(n)\leq\gamma \log_3 n\}|&\leq \sum_{m=1}^{\log_3 x+1}{\sum_{k=3}^{\lfloor\frac{\gamma m}{\sigma}\rfloor+1}}U_{\mathbb N}(k,m){\leq \sum_{m=1}^{\log_3 x+1}{\sum_{k=3}^{\lfloor\frac{\gamma m}{\sigma}\rfloor+1}}\sum_{l=1}^m U_B(k,l)}\\
& {\leq \sum_{m=1}^{\log_3 x+1}{\sum_{k=3}^{\lfloor\frac{\gamma m}{\sigma}\rfloor+1}}\lambda\sum_{l=1}^m \frac{(Cl)^{k-2}}{k^{k+1}}}\leq 
 \ll\sum_{m=1}^{\log_3 x+1}m{\sum_{k=3}^{\lfloor\frac{\gamma m}{\sigma}\rfloor+1}}\frac{(Cm)^{k-2}}{k^{k+1}}\\
 &\lesssim\log x \sum_{m=1}^{\log_3 x+1}{\frac{(Cm)^{\lfloor\frac{\gamma m}{\sigma}\rfloor-1}}{(\lfloor\frac{\gamma m}{\sigma}\rfloor+1)^{\lfloor\frac{\gamma m}{\sigma}\rfloor+2}}}\lesssim \log x\sum_{m=1}^{\log_3 x+1}\Big(\frac{C\sigma}{\gamma}\Big)^{\frac{\gamma m}{\sigma}-1}\\
 &\lesssim \log x\Big(\frac{C\sigma}{\gamma}\Big)^{\frac{\gamma }{\sigma}\log_3 x}=x^{\frac{\gamma }{\sigma}\log_3\frac{C\sigma}{\gamma}}\log x=o(x),
\end{align*} 
where we also used that 
 $C\sigma/\gamma>1$. Hence, {since $|\{n\leq x:\de(n)<2\sigma\}|=O(\log x)$,} for almost all numbers $n$, we have $\|n\|\geq (3+\gamma)\log_3 n$. Now, in order to prove Theorem \ref{th_low}, it suffices to show that there is an appropriate choice of 
 $C$ for $\gamma=0.06$.

If we choose $C:=780$, {$\lambda:=2.5$,}  
 so that 
  \eqref{00} {holds}, and put $c:={2^{10}3^{-5}}\tau/ 780<{0.9}$ (note that for such choice we have the condition $c^3\leq 420^{-1}$ fulfilled), 
 we can see that the sum of the lower bounds \eqref{cond0}-\eqref{cond8} for $c_j,\;1\leq j\leq 6,$ is
\begin{align*}
    \frac{127e^{\eta}}{2-3c}C^{-1}&+ {38}e^{\eta}\ll C^{-1}+\frac{{702}}{2-3c} C^{-1}+\frac{{6536}e^{\eta}c}{1-c}C^{-2}\\
    &+\frac{11124c^2e^{\eta}}{(1-{1.3}c^3)(2-3c)} C^{-1}+{286749}\ll^{-1} C^{-2}<1,    
\end{align*}
whence we can find appropriate values for $c_j,\;1\leq j\leq 6,$ to satisfy all the necessary conditions.

 With such choice we have that $\gamma:=0.06$ satisfies $\gamma\sigma^{-1}\log_3 C\sigma\gamma^{-1}<1$, which finally completes the proof of Theorem \ref{th_low}.
  \end{proof}

 \begin{remark}
 One can see that for other $\sigma<0.48$, inequality \eqref{we_want} with some $\lambda=\lambda(\sigma)$ and $C=C(\sigma)$ can be proved in the same fashion.
 \end{remark}

\begin{acknowledgements} {We are very grateful to Juan Arias de Reyna for carefully reading the paper and pointing out several inaccuracies in the first version of the manuscript as well as to Harry Altman, Alexander Kalmynin, and Qizheng He for providing us with useful references.} We also thank Artem Badalyan for his help with numerical experiments.
\end{acknowledgements}


\begin{thebibliography}{65}

\bibitem{A}
H. Altman, {\it Integer complexity: representing numbers of bounded defect}, Theor. Comput. Sci. 652 (2016), 64--85.

\bibitem{A2}
{H. Altman, {\it Integer complexity: algorithms and computational results}, Integers 18(45)  (2018).}

\bibitem{AZ}
H. Altman, J. Zelinsky, {\it Numbers with integer complexity close to the lower bound}, Integers 12(6)  (2012), 1093--1125.


\bibitem{Am}
{K. Amano, {\it Integer complexity and mixed binary-ternary representation}, Leibniz Int. Proc. Inform.  
(LIPIcs) 248(29)  (2022), 1--16.}

\bibitem{Ar}
J. Arias de Reyna, {\it Complexity of integer numbers}, Integers 24  (2024) (translation from {\it Complejidad de los n\'umeros naturales}, Gaceta R. Soc. Mat. Esp. 3  (2000), 230--250).

\bibitem{AL}
J. Arias de Reyna, J. van de Lune, {\it Algorithms for determining integer complexity}, arXiv:1404.2183.

\bibitem{CEHMPSSV}
K. Cordwell, A. Epstein, A. Hemmady, S. J. Miller, E. Palsson, A. Sharma, S. Steinerberger, Y. N. T. Vu, {\it On algorithms to calculate integer complexity}, Integers 19  (2019), A12.

\bibitem{G1}
R.K. Guy, {\it Some suspiciously simple sequences}, Amer. Math. Monthly 93(3)  (1986), 186--190. 

\bibitem{H}
{Q. He, {\it Improved algorithms for integer complexity}, Proceedings of 2024 Symposium on Simplicity in Algorithms (SOSA), 107--114.}

\bibitem{IBCOOP}
J. Iraids, K. Balodis, J. \u{C}er\c{n}enoks, M. Opmanis, R. Opmanis, K. Podnieks, {\it Integer complexity: experimental ana analytical results}, arXiv:1203.6462, Scientific Papers University of Latvia, Comp. Sci. and Inf. Tech. 787 (2012), 153--179.

\bibitem{IK}
H. Iwaniec, E. Kowalski, {\it Analytic number theory}, AMS, Providence, 2004.

\bibitem{KN}
L. Kuipers, H. Niederreiter, {\it Uniform distribution of sequences}, Wiley, New York, 1974.

\bibitem{MP}
K. Mahler, J. Popken, {\it On a maximum problem in arithmetics (Dutch)}, Nieuw Arch. Wiskunde 3(1)  (1953), 1--15.

\bibitem{R}
D.A. Rawsthorne, {\it How many $1$'s are needed?}, Fibonacci Quart. 27(1)  (1989), 14--17.

\bibitem{Sh}
{C. Shriver, {\it An application of Markov chain analysis to integer complexity}, arXiv:1511.07842.}

\bibitem{Sl}
N.J.A. Sloane, {\it The on-line encyclopedia of integer sequences. The complexity of $n$: number of $1$'s required to build $n$ using $+$ and $\cdot$}, https://oeis.org/A005245.

\bibitem{S}
S. Steinerberger, {\it A short note on integer complexity}, Contrib. Discrete Math. 9(1) (2014).

\bibitem{Z}
J. Zelinsky, {\it Upper bounds on integer complexity}, arXiv:2211.02995.



\end{thebibliography}
\end{document}